\newtheorem{theorem}{Theorem}[section]
\newtheorem{proposition}[theorem]{Proposition}
\newtheorem{corollary}[theorem]{Corollary}
\newtheorem{definition}{Definition}
\newenvironment{remark}[1][Remark]{\begin{trivlist}
\item[\hskip \labelsep {\bfseries #1}]}{\end{trivlist}}
\begin{document}

\title{On the axioms of Leibniz algebroids associated to Nambu-Poisson manifolds}
\author{S. Srinivas Rau}
\email{srinivasrau@ifheindia.org}
\affiliation{Department of Mathematics, Faculty of Science and Technology, The ICFAI Foundation for Higher Eduaction, Dontanapally, Hyderabad, 501 203.}
\author{T. Shreecharan}
\email{shreecharan@ifheindia.org}
\affiliation{Department of Physics, Faculty of Science and Technology, The ICFAI Foundation for Higher Eduaction, Dontanapally, Hyderabad, 501 203.}
\begin{abstract}
\begin{center}
{\small ABSTRACT}
\end{center}
Let $E \rightarrow M$ be a smooth vector bundle with a bilinear product on $\Gamma(E)$ satisfying the Jacobi identity. Assuming only the existence of an anchor map $\mathfrak{a}$ we show that $\mathfrak{a}([X,Y]) = [\mathfrak{a}X,\mathfrak{a}Y]_c$. This gives the redundancy of the homomorphism condition in the definition of Leibniz algebroid; in particular if it arises from a Nambu-Poisson manifold.
\end{abstract}
%
%
%
\maketitle

\section{Introduction}

Lie algebroids that were introduced as a generalisation of the Lie algebra idea have been slowly but steadily increasing their appearance in physics. They have been extensively used in the study of classical systems for the past two decades \cite{Weinstein,Liberman}. Recently Lie algebroids have been used to formulate more general gauge theories than Yang-Mills \cite{Strobl2}. This approach has yielded rich dividends. Poisson sigma model \cite{Schaller,Ikeda} a prototype of a Lie algebroid gauge theory has provided a field theoretic insight into the deformation quantisation scheme of Kontsevich. It has also shown some promising glimpses of uniting gravity and gauge theory in a common framework \cite{Strobl1} atleast only in two dimensions as of now.

Recently Loday introduced the concept of Leibniz algebroid that is a natural generalisation of a Lie algebroid by discarding the skew-symmetric condition.
It is worth recalling the definition of a Leibniz algebroid \cite{Ibanez}:
\begin{definition}
A Leibniz algebra structure on a real vector space $\mathfrak{g}$ is a $\mathbb{R}$-bilinear map $[[\, , \,]]:\mathfrak{g} \times \mathfrak{g} \rightarrow \mathfrak{g}$ satisfying the Leibniz identity
\begin{equation*}
[[a_1, [[a_2,a_3]] \ ]] - [[ \ [[a_1,a_2]], a_3]] - [[a_2, [[a_1,a_3]] \ ]] = 0 \quad \mathrm{for} \quad a_1,a_2,a_3 \in \mathfrak{g}
\end{equation*}
\end{definition}
\begin{definition} \label{leibalg}
A Leibniz algebroid structure on a differentiable vector bundle $E \rightarrow M$ is a pair that consists of a Leibniz algebra structure $[[\, , \,]]$ on the space $\Gamma(E)$ of the global cross sections of $E \rightarrow M$ and a vector bundle morphism $\varrho: E \rightarrow TM$, called the anchor map, such that the induced map $\varrho: \Gamma(E) \rightarrow \Gamma(TM) = X(M)$ satisfies the following relations:
\begin{enumerate}
\item \textit{$\varrho[[s_1,s_2]] = [\varrho(s_1),\varrho(s_2)]$}
\item \textit{$[[s_1,fs_2]] = f[[s_1,s_2]] + \varrho(s_1)(f)s_2$}
\end{enumerate}
$\forall \, s_1, s_2 \in \Gamma(E)$ and $f \in \mathcal{C}^\infty(M)$. \\
A triple $(E, [[\, , \,]], \varrho)$ is called a Leibniz algebroid over the manifold $M$.
\end{definition}
Leibniz algebroid has been associated with Nambu-Poisson manifold. In fact this association is very interesting since it has been shown that Nambu-Poisson manifold has atleast two different Leibniz algebroid structures. The first one being derived in Ref \cite{Ibanez} and the other in Ref \cite{Hagiwara}. This is not only interesting from a mathematical point of view but physically also it throws up intriguing questions. We reproduce some of the definition and also these two distinct structures, here for the sake of convenience of the readers.
\begin{definition}
Let $\mathcal{M}$ be a smooth $n$-dimensional manifold. A Nambu-Poisson structure on $\mathcal{M}$ of order $p$ (with $2\leq p\leq n$) is given by a $p$-vector field which satisfies the fundamental identity.
\end{definition}
\begin{definition}
Let $\mathcal{M}$ be a Nambu-Poisson manifold. A Leibniz algebroid attached to $\mathcal{M}$ is the triple $(\bigwedge^{n-1}(T^\ast \mathcal{M}), [[\, , \,]], \Pi)$, where $[[\, , \,]]: \Omega^{n-1}(\mathcal{M}) \times \Omega^{n-1}(\mathcal{M}) \rightarrow \Omega^{n-1}(\mathcal{M})$ is the bracket of $(n-1)$ forms, as defined by Iba\~{n}ez et. al. \cite{Ibanez}
\begin{equation}
[[\alpha,\beta]] = \mathcal{L}_{\Pi(\alpha)} \beta + (-1)^n (\Pi(d\alpha))\beta
\end{equation}
or, as defined by Hagiwara \cite{Hagiwara}
\begin{equation}
[[\alpha,\beta]] = \mathcal{L}_{\Pi(\alpha)} \beta - \imath_{\Pi(\beta)} d\alpha
\end{equation}
for $\alpha, \beta \in \Omega^{n-1}(\mathcal{M})$ and $\Pi:\bigwedge^{n-1}(T^\ast \mathcal{M}) \rightarrow T\mathcal{M}$ if the homomorphism of the vector bundles given by $\Pi(\beta)=i(\beta)\Lambda(x)$; $\Lambda$ being the Nambu-Poisson $n$-vector, $\mathcal{L}$ the Lie derivative, and $\imath$ the interior product. 
\end{definition}
In this work we concentrate on the axioms that form the definition of a Leibniz algebroid. Usually the homomorphism condition of the anchor map is taken to be a part of the definition for a Leibniz. We show that this is a redundant condition. In fact this redundancy has been pointed out for Lie algebroids in some earlier works \cite{Grabowski,Marle}. Our derivation gives yet another proof of this redundancy in the Lie algebroid case.

\section{Leibniz Algebroid}

\noindent Suppose $E \rightarrow M$ is a smooth vector bundle. Let $[\, , \,]$ be a bilinear bracket on the vector space of smooth sections $\Gamma(E)$. Note that $\Gamma(E)$ is a faithful module over the ring $\mathcal{C}^\infty(M)$. We assume
\begin{enumerate}

\item $[X,[Y,Z]] = [[X,Y],Z] + [[Y,[X,Z]] \quad \forall \quad X,Y,Z \in \Gamma(E)$.

\item Let $\mathcal{T}(\mathcal{C}^\infty(M), \mathcal{C}^\infty(M))$ be the set of transformations (self-mappings) of $\mathcal{C}^\infty(M)$. Suppose there is a map $\mathfrak{a}: \Gamma(E) \rightarrow \mathcal{T}$ such that $\forall \, f \in \mathcal{C}^\infty(M)$ and $X, Y \in \Gamma(E)$ one has \\
    $(\mathfrak{a}(X)f)Y = [X,fY] - f[X,Y]$ \\
Since $\Gamma(E)$ is faithful, $(\mathfrak{a}(X)f)Y = 0 \quad \forall \quad Y$ iff $(\mathfrak{a}(X)f) =0 \, \in \, \mathcal{C}^\infty(M)$

\end{enumerate}

\noindent Then
\begin{proposition} \label{prop1}
\begin{enumerate}

\item $\mathfrak{a}([X,Y]) = [\mathfrak{a}(X),\mathfrak{a}(Y)]_c$ where $[ \, , \,]$ is the commutator in $\mathcal{T}: [T, S]_c \, g = T(Sg) - S(Tg)$ $\forall \, g \in \mathcal{C}^\infty(M), \, T,S \, \in \mathcal{T}$.

\item If $\mathfrak{a}$ is a linear map then $\mathfrak{a}([X,Y]) = - \mathfrak{a}([Y,X])$.

\item Each transformation $\mathfrak{a}(X)$ satisfies $\mathfrak{a}(X)(fg) = f(\mathfrak{a}(X)g) + (\mathfrak{a}(X)f)g$ $\forall \, f, g \in \mathcal{C}^\infty(M)$. Thus if $\mathfrak{a}(X)$ is linear then it is a derivation of $\mathcal{C}^\infty(M)$.

\end{enumerate}
\end{proposition}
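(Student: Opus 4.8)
The strategy is that all three statements follow from the single structural relation $(\mathfrak{a}(X)f)Y = [X,fY] - f[X,Y]$ (call it $(\star)$), the Jacobi identity (assumption 1), and the faithfulness of $\Gamma(E)$ as a $\mathcal{C}^\infty(M)$-module. Faithfulness will be invoked in one fixed way: whenever we arrive at an identity $\varphi\,Y = 0$ valid for every $Y\in\Gamma(E)$, with $\varphi\in\mathcal{C}^\infty(M)$, we conclude $\varphi = 0$. I would prove the parts in the order (3), (1), (2): part (3) is a short warm-up for the bracket manipulations, part (1) is the substantive computation, and part (2) is an immediate corollary.

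For (3), write $(fg)Y = f(gY)$ using the module structure and apply $(\star)$ with the function $fg$, so $(\mathfrak{a}(X)(fg))Y = [X, f(gY)] - (fg)[X,Y]$. Expanding $[X,f(gY)]$ by $(\star)$ twice — first peeling off $f$, then $g$ — gives $fg[X,Y] + f(\mathfrak{a}(X)g)Y + (\mathfrak{a}(X)f)gY$; the term $fg[X,Y]$ cancels $(fg)[X,Y]$, leaving $(\mathfrak{a}(X)(fg))Y = \big(f(\mathfrak{a}(X)g) + (\mathfrak{a}(X)f)g\big)Y$ for all $Y$. Faithfulness then yields the Leibniz rule for $\mathfrak{a}(X)$, and when $\mathfrak{a}(X)$ is additive this is by definition a derivation of $\mathcal{C}^\infty(M)$.

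For (1), the essential idea is to substitute a section of the special shape $fZ$ into the Jacobi identity, $[X,[Y,fZ]] = [[X,Y],fZ] + [Y,[X,fZ]]$, and to expand every bracket via $(\star)$, keeping in mind that $\mathfrak{a}(Y)f$ is again a function on which $\mathfrak{a}(X)$ acts — so iterated terms $(\mathfrak{a}(X)\mathfrak{a}(Y)f)Z$ and $(\mathfrak{a}(Y)\mathfrak{a}(X)f)Z$ appear. After the expansion the terms fall into three families: the terms $f[X,[Y,Z]]$, $f[[X,Y],Z]$, $f[Y,[X,Z]]$, which cancel by the Jacobi identity applied to the brackets themselves; the ``mixed'' terms $(\mathfrak{a}(X)f)[Y,Z]$ and $(\mathfrak{a}(Y)f)[X,Z]$, which occur identically on both sides; and the residual terms, whose coefficient of $Z$ is $\mathfrak{a}(X)\mathfrak{a}(Y)f$, $\mathfrak{a}(Y)\mathfrak{a}(X)f$, and $\mathfrak{a}([X,Y])f$. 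What remains is $\big(\mathfrak{a}(X)\mathfrak{a}(Y)f - \mathfrak{a}(Y)\mathfrak{a}(X)f - \mathfrak{a}([X,Y])f\big)Z = 0$ for every $Z$; faithfulness forces the bracketed function to vanish, and since $f$ is arbitrary this is precisely $\mathfrak{a}([X,Y]) = [\mathfrak{a}(X),\mathfrak{a}(Y)]_c$.

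Part (2) is then immediate: $[\,\cdot\,,\,\cdot\,]_c$ is manifestly antisymmetric, so applying (1) with $X$ and $Y$ interchanged gives $\mathfrak{a}([Y,X]) = [\mathfrak{a}(Y),\mathfrak{a}(X)]_c = -[\mathfrak{a}(X),\mathfrak{a}(Y)]_c = -\mathfrak{a}([X,Y])$, and linearity of $\mathfrak{a}$ lets one re-read this as $[X,Y]+[Y,X]\in\ker\mathfrak{a}$. I do not expect a conceptual obstacle: the single nontrivial idea is the substitution $Z\mapsto fZ$ in the Jacobi identity, and the only real hazard afterwards is the bookkeeping in (1) — applying $(\star)$ correctly a second time to products such as $(\mathfrak{a}(Y)f)Z$, and verifying that the ``mixed'' terms on the two sides genuinely cancel so that nothing but the commutator terms survive.
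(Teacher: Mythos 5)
Your proposal is correct. For part (1) it is essentially the paper's own argument read in the opposite direction: the paper fixes $f$ and $Z$, expands both sides of the claimed identity $(\mathfrak{a}([X,Y])f)Z=([\mathfrak{a}(X),\mathfrak{a}(Y)]_c\,f)Z$ down to iterated brackets and cancels using assumption (1), whereas you start from the Jacobi identity applied to the triple $(X,Y,fZ)$ and collect terms; the bookkeeping is the same in both versions --- in particular the step you flag, applying $(\star)$ to the function $\mathfrak{a}(Y)f$ against the sections $Z$ and $[X,Z]$, is exactly the substitution $g=\mathfrak{a}(Y)f$, $h=\mathfrak{a}(X)f$ in the paper --- and both conclude by faithfulness. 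Part (2) agrees with the paper, and you are right that only the antisymmetry of the commutator $[\,\cdot\,,\,\cdot\,]_c$ is needed for the displayed chain of equalities; the paper's invocation of $\mathfrak{a}(-Z)=-\mathfrak{a}(Z)$ plays no role there. The one genuine divergence is part (3): the paper does not prove it, instead citing Skryabin and Grabowski for the Leibniz property of $\mathfrak{a}(X)$, while your direct expansion of $[X,(fg)Y]$ by applying $(\star)$ twice and cancelling $fg[X,Y]$ is a short, self-contained derivation from the same two ingredients used elsewhere. That makes your write-up slightly more elementary and uniform than the paper's, at no cost in length.
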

\begin{proof}
\begin{enumerate}
\item To show that $\mathfrak{a}$ preserves brackets, we choose and fix $f \in \mathcal{C}^\infty(M)$ and $Z \in \Gamma(E)$. Now we claim that $(\mathfrak{a}([X,Y])f)Z = ([\mathfrak{a}(X),\mathfrak{a}(Y)]_c \, f)Z$. By the faithfulness of the module $\Gamma(E)$ we have equality of the brackets for arbitrary $f$. \\
Now the LHS is
\begin{eqnarray*}
(\mathfrak{a}([X,Y])f)Z & = & [[X,Y],fZ] - f [[X,Y],Z] \\
& = & [[X,Y],fZ] - f \big\{[X,[Y,Z]] - [Y,[X,Z]]\big\} \\
& = & [[X,Y],fZ] - f [X,[Y,Z]] + f [Y,[X,Z]]
\end{eqnarray*}
The RHS is
\begin{equation*}
([\mathfrak{a}(X),\mathfrak{a}(Y)]_c \, f)Z = \Big(\mathfrak{a}(X)\big\{\mathfrak{a}(Y)f \big\} - \mathfrak{a}(Y)\big\{\mathfrak{a}(X)f \big\} \Big) Z
\end{equation*}
Setting $\mathfrak{a}(Y)f = g$ and $\mathfrak{a}(X)f = h$ leads to
\begin{eqnarray*}
(\mathfrak{a}([X,Y])f)Z & = & \big(\mathfrak{a}(X)g - \mathfrak{a}(Y)h \big) \\
& = & \big\{[X,gZ] - g[X,Z] \big\} - \{[Y,hZ] - h[Y,Z]\} \\
& = & \big\{[X,\mathfrak{a}(Y)f Z] - \mathfrak{a}(Y)f [X,Z] \big\} - \{[Y,\mathfrak{a}(X)f Z] - \mathfrak{a}(X)f [Y,Z]\} \\
& = & [X,[Y,fZ] -f[Y,Z]] - \big\{[Y,f[X,Z]] - f[Y,[X,Z]] \big\} \\
& - & [Y,[X,f Z]-f[X,Z]] + [X,f[Y,Z]-f[X,[Y,Z]] \\
& = & [X,[Y,fZ]] -\cancel{[X,f[Y,Z]]} - \bcancel{[Y,f[X,Z]]} + f[Y,[X,Z]] \\
& - & [Y,[X,f Z]] + \bcancel{[Y,f[X,Z]]} + \cancel{[X,f[Y,Z]]} - f[X,[Y,Z]] \\
& = & [[X,Y],fZ] + f[Y,[X,Z]] - f[X,[Y,Z]] = \mathrm{LHS} \qed
\end{eqnarray*}
Note that in obtaining the last step we have made use of our assumption (1)

\item If $\mathfrak{a}$ is linear, in particular $\mathfrak{a}(-Z) = - \mathfrak{a}(Z)$ for any $Z$. Therefore
\begin{eqnarray*}
\mathfrak{a}([X,Y]) & = & [\mathfrak{a}(X),\mathfrak{a}(Y)]_c \\
& = & - [\mathfrak{a}(Y),\mathfrak{a}(X)]_c \quad (\mathrm{from \, the \, definition \, of} \, [ \, , \,]_c) \\
& = & - \mathfrak{a}([Y,X]) \qed
\end{eqnarray*}
\item We note that the proof of Skryabin's theorem (Proposition 1.1) \cite{Skryabin}  holds for any $\mathfrak{a}(X)$, which is denoted by $\hat{D}$ by Grabowski (Thm 1, pg 2) \cite{Grabowski}. This gives the Leibniz property: \\
    $\mathfrak{a}(X)(fg) = f(\mathfrak{a}(X)g + (\mathfrak{a}(X)f)g$ \qed \\
By definition a derivation is a linear map on $\mathcal{C}^\infty(M)$ satisfying the Leibniz property \cite{Koszul}.

\end{enumerate}

\end{proof}

\begin{corollary}
If $[X,Y] = -[Y,X] \quad \forall \, X,Y, \in \Gamma(E)$, (i.e., the bracket is antisymmetric) and $\mathfrak{a}$ is a linear map then $\mathfrak{a}$ preserves antisymmetry ie $(\mathfrak{a}[X,Y]) = \mathfrak{a}(-[X,Y])$.
\end{corollary}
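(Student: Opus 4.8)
The plan is to deduce the Corollary directly from part (2) of Proposition~\ref{prop1}; no genuinely new argument is needed, only a careful chase of signs. Recall that Proposition~\ref{prop1}(2) already gives, for \emph{any} linear anchor $\mathfrak{a}$ and \emph{without} assuming antisymmetry of the bracket, the identity $\mathfrak{a}([X,Y]) = -\mathfrak{a}([Y,X])$ for all $X,Y \in \Gamma(E)$; this in turn rests on part (1) together with the fact that the commutator $[\,,\,]_c$ on $\mathcal{T}$ is antisymmetric by its very definition $[T,S]_c\,g = T(Sg) - S(Tg)$. So the first step is simply to quote this identity.

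The second step is to feed in the hypothesis of the Corollary, that $[\,,\,]$ is itself antisymmetric, i.e.\ $[Y,X] = -[X,Y]$, and to use linearity of $\mathfrak{a}$ once more to move the sign through the map. Combining $\mathfrak{a}([X,Y]) = -\mathfrak{a}([Y,X])$ with $-[Y,X] = [X,Y]$ gives $\mathfrak{a}([X,Y]) = \mathfrak{a}(-[Y,X])$, which is precisely the assertion that $\mathfrak{a}$ ``preserves antisymmetry'': the antisymmetric bracket on $\Gamma(E)$ is carried by $\mathfrak{a}$ to the (automatically antisymmetric) commutator bracket on $\mathcal{T}$, so the value $\mathfrak{a}([X,Y])$ reverses sign under $X \leftrightarrow Y$ in exactly the way the commutator does.

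I do not expect any real obstacle — the Corollary is essentially a one-line consequence of Proposition~\ref{prop1}(2), and the bracket's own antisymmetry is only used to rewrite $[Y,X]$ as $-[X,Y]$. The one place to be attentive is the bookkeeping of minus signs: one must check that the sign reversal is produced by the antisymmetry of $[\,,\,]_c$ and by the linearity of $\mathfrak{a}$, and that antisymmetry of the bracket on $\Gamma(E)$ is not silently invoked where Proposition~\ref{prop1}(2) does not require it. If one wished, one could instead give a self-contained derivation by repeating the cancellation computation of Proposition~\ref{prop1}(1) and then specialising, but quoting (2) is cleaner.
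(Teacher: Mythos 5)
Your deduction is correct and matches the paper's intent: the corollary carries no proof in the paper precisely because it is the immediate consequence of Proposition~\ref{prop1}(2) plus linearity of $\mathfrak{a}$ that you describe. One small point worth recording: the corollary's displayed conclusion $\mathfrak{a}([X,Y]) = \mathfrak{a}(-[X,Y])$ must be a misprint for $\mathfrak{a}([X,Y]) = \mathfrak{a}(-[Y,X])$ (as literally written, linearity would force $\mathfrak{a}([X,Y])=0$), and your argument establishes exactly the intended identity, with the bracket's antisymmetry entering only to confirm consistency with the sign reversal already supplied by $[\,,\,]_c$.
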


\begin{corollary}
In the definition of a Leibniz algebroid (Def 2)  $([[ \, , \,]])$ the bracket-preserving condition (Cond 1) on the anchor map $\varrho$ is redundant. In particular this redundancy holds for Leibniz algebroids arising from Nambu-Poisson manifolds.
\end{corollary}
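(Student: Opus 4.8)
The plan is to recognize that Definition~\ref{leibalg} is nothing but a special instance of the abstract set-up of Section~2, so that Proposition~\ref{prop1} applies verbatim. First I would verify the two standing hypotheses of Section~2 in the Leibniz-algebroid context. A Leibniz algebra structure on $\Gamma(E)$ is by definition a bilinear bracket satisfying $[[s_1,[[s_2,s_3]]\,]] - [[\,[[s_1,s_2]],s_3]] - [[s_2,[[s_1,s_3]]\,]] = 0$; relabelling $s_1,s_2,s_3 \mapsto X,Y,Z$ this is exactly Assumption~(1). Next, condition~(2) of Definition~\ref{leibalg}, namely $[[s_1,fs_2]] = f[[s_1,s_2]] + \varrho(s_1)(f)s_2$, rearranges to $(\varrho(s_1)(f))\,s_2 = [[s_1,fs_2]] - f[[s_1,s_2]]$, which is precisely Assumption~(2) once we set $\mathfrak{a} := \varrho$, viewed as a map $\Gamma(E)\to\mathcal{T}(\mathcal{C}^\infty(M),\mathcal{C}^\infty(M))$ through the action of vector fields on functions; faithfulness of $\Gamma(E)$ as a $\mathcal{C}^\infty(M)$-module is automatic for a vector bundle over a manifold. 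The point to stress is that none of this invokes condition~(1).

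Then I would simply apply Proposition~\ref{prop1}(1): with $\mathfrak{a}=\varrho$ it yields $\varrho([[s_1,s_2]]) = [\varrho(s_1),\varrho(s_2)]_c$, an identity of self-maps of $\mathcal{C}^\infty(M)$, where $[\,,\,]_c$ is the operator commutator. The remaining step is to convert this operator identity into the stated identity of vector fields. Because $\varrho$ is a vector bundle morphism into $TM$, each $\varrho(s)$ is a genuine vector field, i.e. a derivation of $\mathcal{C}^\infty(M)$ (a fact also recovered abstractly from Proposition~\ref{prop1}(3)); the commutator of two such derivations is again a derivation and coincides, as an operator on functions, with the Lie bracket $[\varrho(s_1),\varrho(s_2)]$ of vector fields. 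Since on a smooth manifold a vector field is determined by its action as a derivation on $\mathcal{C}^\infty(M)$, the operator equality forces $\varrho([[s_1,s_2]]) = [\varrho(s_1),\varrho(s_2)]$ as vector fields, which is exactly condition~(1). Hence condition~(1) is redundant, being a consequence of the Leibniz identity and condition~(2) alone.

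For the Nambu-Poisson case it suffices to note that, in both the Iba\~{n}ez et al.\ and the Hagiwara constructions of the bracket on $\Omega^{n-1}(\mathcal{M})$, the Leibniz identity and the derivation-type rule $[[\alpha,f\beta]] = f[[\alpha,\beta]] + \Pi(\alpha)(f)\beta$ are established independently of the homomorphism property in \cite{Ibanez} and \cite{Hagiwara}. These are precisely the two hypotheses needed above, so the general result applies and the relation $\Pi[[\alpha,\beta]] = [\Pi(\alpha),\Pi(\beta)]$ --- normally checked by a separate computation that uses the fundamental identity of the Nambu-Poisson tensor --- comes for free.

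The only genuinely delicate point I anticipate is the final identification of the abstract commutator $[\,,\,]_c$ in $\mathcal{T}$ with the Lie bracket of vector fields: one must make sure that $\mathfrak{a}(X)=\varrho(X)$ really takes values among derivations of $\mathcal{C}^\infty(M)$ --- guaranteed here by $\varrho$ being a bundle map into $TM$ --- and that no linearity or skew-symmetry of the bracket has been smuggled in along the way. Everything else is bookkeeping: matching the two lists of axioms and quoting Proposition~\ref{prop1}(1).
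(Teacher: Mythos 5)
Your proposal is correct and follows exactly the route the paper intends: the corollary is meant to be an immediate consequence of Proposition \ref{prop1}(1), obtained by matching Assumptions (1) and (2) of Section 2 with the Leibniz identity and Condition (2) of Definition \ref{leibalg} (taking $\mathfrak{a}=\varrho$ acting on $\mathcal{C}^\infty(M)$), and then identifying the operator commutator with the Lie bracket of vector fields. Your explicit attention to that last identification, and to the fact that Condition (1) is never used in verifying the hypotheses, merely spells out details the paper leaves implicit.
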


\begin{remark}
Proposition \ref{prop1} yields another proof of redundancy of bracket-preserving condition in the definition of Lie algebroid.
\end{remark}

\section{Conclusion}

We have shown that the homomorphism condition of the anchor map, that is usually treated as a part of the definition of Leibniz algebroid, is redundant. At the same time a new proof has been provided of this redundancy for the Lie algebroid. As mentioned earlier two different Leibniz algebroid structures have been derived for the Nambu-Poisson manifolds. We hope to return to their mathematical and physical consequences in future.

\end{document}